\newtheorem{theorem}{Theorem}[section]
\newtheorem{lemma}[theorem]{Lemma}
\newtheorem{proposition}[theorem]{Proposition}
\newtheorem{corollary}[theorem]{Corollary}
\newtheorem*{theorem*}{Theorem}
\theoremstyle{definition}
\newcommand{\rb}[1]{{\left( #1 \right)}}
\let\al\alpha
\let\be\beta
\let\ga\gamma
\let\si\sigma
\let\Ga\Gamma
\let\De\Delta
\def\CC{{\mathcal C}}
\def\P{{\mathbf{P}}}
\def\NP{{\mathbf{NP}}}
\def\MN{{\mathbb{N}}}
\def\MZ{{\mathbb{Z}}}
\let\too\longrightarrow
\let\bd\partial
\let\ti\tilde
\let\seq\subseteq
\let\eset\emptyset
\DeclareMathOperator\supp{supp}
\DeclareMathOperator\diam{diam}
\newcommand\Ints{{\mathbb Z}}
\newcommand\Reals{{\mathbb R}}
\newcommand\set[1]{\{ #1 \}}
\newcommand\setof[2]{\{ #1 \mid #2\}}
\newcommand\sgp[1]{\langle #1 \rangle}
\def\theenumi{(\roman{enumi})}  
\def\p@enumi{\thelemma}
\let\@savedlabel\label
\def\label#1{\@savedlabel{#1}\ifnum\@listdepth=1%
\protected@edef\@currentlabel{\theenumi}\@savedlabel{#1-it@m}\fi}
\title{Spherical quadratic equations in free metabelian groups}
\author[]{Igor Lysenok}
\address{Steklov Institute of Mathematics, Gubkina str.\ 8, 119991
Moscow, Russia} \email{igor.lysenok@gmail.com}
\thanks{The first author has been partially supported by the Russian Foundation for Basic Research}
\author[]{Alexander Ushakov}
\address{Department of Mathematics, Stevens Institute of Technology,
Hoboken, NJ, 07030 USA} \email{sasha.ushakov@gmail.com}
\thanks{The second author has been partially supported by NSF grant DMS--0914773}
\begin{document}

\begin{abstract}
We prove that the Diophantine problem for spherical quadratic equations 
in free metabelian groups is solvable and, moreover, NP-complete.

\noindent
{\bf Keywords.} Free metabelian group, Diophantine problem, quadratic equation,
NP-completeness.

\noindent
{\bf 2010 Mathematics Subject Classification.} 20F16, 20F10.
\end{abstract}

\maketitle

\section{Introduction}

Let $G$ be a group, $X$ a set of variables, and $F_X$ the free group on $X$.
An {\em equation} in ~$G$ is a formal equality $W=1$ where $W \in G \ast F_X$.
A {\em solution} of an equation $W=1$ is a homomorphism $\al: G*F_X \to G$ such that $\al(W)=1$
and $\al(g)=g$ for all $g \in G$. The {\em Diophantine problem} for a class ${\mathcal C}$ of equations
 is to decide if a given equation $W=1$ in $\CC$ has a solution or not.

A word $W \in G*F_X$ and an equation $W=1$ in $G$ are {\em quadratic} if every variable $x\in X$ occurring in $W$
occurs exactly twice (as $x$ or $x^{-1}$).
Let $\CC_m$ be the class of quadratic equations of the form
$$
  z_1 c_1z_1^{-1} \ldots z_m c_mz_m^{-1} = 1, \quad c_1, \dots, c_m \in G
$$
and
$$
  \CC = \bigcup_{m=1}^\infty \CC_m
$$
We call equations in $\CC$ {\em spherical quadratic equations}. This reflects the fact that
the surface corresponding to the quadratic word $W$ is a sphere with $m$ disks removed
and thus $W$ has genus $0$ (see for example \cite{Lysenok_Myasnikov:2011}). Note that given a group $G$,
the Diophantine problem for the classes of equations ~$\CC_1$ and $\CC_2$ are the word problem and the
conjugacy problem in $G$, respectively.
Solvability of these problems in free metabelian groups was proven
in \cite{Magnus:1939} and ~\cite{Matthews:1966}. Nearly linear time algorithm
(in the length of $w$) for the word problem was found in \cite{Miasnikov_Romankov_Ushakov_Vershik:2010}
and polynomial time algorithm for the conjugacy problem was described in \cite{Vassilieva:2011}.

In this note, we consider spherical quadratic equations in the free metabelian group $M_n$ of rank $n$.
We prove that:
\begin{itemize}
    \item
for a fixed $m$,
there exists a polynomial time algorithm solving the Diophantine problem for equations in $\CC_m$.
    \item
the Diophantine problem for $\CC$ in $\NP$-complete.
\end{itemize}

\subsection*{Preliminaries: the free metabelian group}

The free metabelian group $M_n$ of rank ~$n$ is
the quotient $F_n / F_n^{(2)}$ of the
free group $F_n$ of rank $n$ over its second derived subgroup $F_n^{(2)} = [F_n', F_n']$ (where
$F_n'$ denotes the commutator subgroup $[F_n, F_n]$).
We fix a generating set $$\set{a_1, a_2, \dots, a_n}$$ for $M_n$ and by
$x \mapsto \bar x$ we denote the natural epimorphism of ~$M_n$ to the free abelian group $A_n = F_n / F_n'$.
In particular, $\set{\bar a_1, \bar a_2, \dots, \bar a_n}$ is a basis for $A_n$.

By $\Ga_n$ we denote the Cayley graph of $A_n$ with respect to the generating set $\set{\bar a_1, \dots, \bar a_n}$.
We view vertices of $\Ga_n$ as elements of $A_n$.
The graph $\Ga_n$ may be naturally viewed as the 1-skeleton $U_n^{(1)}$ of the $n$-dimensional
cubic complex $U_n$
obtained by partitioning the euclidean space ~$\Reals^n$ into the union of unit $n$-cubes.
By $C_1(\Ga_n)$ we denote the free abelian group of $1$-chains of $\Ga_n$ over $\Ints$ (or flows over $\Ints$ with finite
support in the terminology of \cite{Miasnikov_Romankov_Ushakov_Vershik:2010}),
that is, the group of all formal linear combinations
$k_1 e_1 + \dots + k_r e_r$ where $k_i\in \Ints$ and $e_i$ are edges of ~$\Ga_n$.

All $r$-chains, $r$-cycles and homology groups are taken with integer coefficients and we use for them standard
notations $C_r(X)$, $Z_r(X)$ and $H_r(X)$.

A word $w \in F_n$ determines a unique edge path $p_w$ in $\Ga_n$ labeled by $w$
which starts at ~$1$ (the vertex corresponding to the identity of $A_n$). It defines a $1$-chain
$\si(w)$ which is the algebraic sum of all edges traversed by $p_w$.
It is not hard to see that the mapping $w \mapsto \si(w)$ induces a well-defined injective map $M_n \to C_1(\Ga_n)$;
that is, two words $u$ and $w$ define the same element of $M_n$ if and only $\si(u) = \si(w)$
(see \cite{DLS,Vershik_Dobrynin:2004,Miasnikov_Romankov_Ushakov_Vershik:2010}). For $g \in M_n$,
we use the same notation $\si(g)$ for the image of $g$ under the induced map.
It is an easy exercise to check that for any $g,h \in M_n$,
\begin{equation} \label{eq:sigma-of-product}
  \si(gh) = \si(g) + \bar g \si(h)
\end{equation}
where $\bar g \si(h)$ is obtained by shifting $\si(h)$ by $\bar g$ (via the action of $A_n$ on $\Ga_n$).

For the boundary $\bd_1 \si(g)$ we obviously have
$$
    \bd_1 \si(g) = \bar g - 1.
$$
This implies that $g \in M_n'$ if and only if $\si(g)$ is a 1-cycle.
Since $\si(gh) = \si(g) + \si(h)$ if $g \in M_n'$,
we get an isomorphism between $M_n'$ and the group $Z_1(\Ga_n)$
of $1$-cycles of $\Ga_n$ (the last group coincides also with $H_1(\Ga_n)$).

\section{Spherical quadratic equations}

Let $W \in M_n * F_X$ be a word of the form
\begin{equation}\label{eq:spherical}
W = z_1 c_1 z_1^{-1} \cdot z_2 c_2 z_2^{-1} \cdot \ldots \cdot z_{m} c_{m} z_{m}^{-1}, \quad c_i \in M_n.
\end{equation}
Denote by $Q$ the set of the involved constants $c_i$,
    $$Q = \set{c_1, c_2, \dots, c_m},$$
and let
$$
  L_Q = \sgp{\bar c_1, \dots, \bar c_m}
$$
be the subgroup of $A_n$ generated by the images of $c_i$.

Let $\De_Q = \Ga_n / L_Q$ be the quotient graph over the action of $L_Q$.

The action of the group $A_n$ on $\Ga_n$ induces its action on $U_n$ and we have $(U_n/L_Q)^{(1)}=\De_Q$.
Since $U_n$ is contractible, we have the exact chain complex:
$$
  0 \to C_n(U_n) \overset{\bd_n}\too C_{n-1}(U_n)
  \overset{\bd_{n-1}}\too \dots \to C_1(U_n)
  \overset{\bd_1}\too C_0(U_n) \to 0.
$$
For $g \in M_n$, let $\tau_Q(g)$ denote the natural image of $\si(g)$ in $C_1(\De_Q)$.
Denote also by $\ga_Q$ the natural projection $U_n \to U_n/L_Q$, so $\ga_Q(\si(g))= \tau_Q(g)$ for any $g \in M_n$.

\begin{lemma}\label{le:tau_w}
If $\tau_Q(g) = 0$ then $g \in M_n'$.
\end{lemma}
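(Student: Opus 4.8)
The plan is to build a homomorphism from $C_1(\Ga_n)$ to $A_n$ that both recovers $\bar g$ from $\si(g)$ and survives the projection to $\De_Q$; since such a map will land in $A_n$ itself rather than in the coset group, it detects $\bar g$ exactly and not merely $\bar g \bmod L_Q$. To see why a finer invariant than the boundary is needed, note that $\tau_Q(g)=0$ gives $\bd_1\tau_Q(g)=\ga_Q(\bd_1\si(g))=\ga_Q(\bar g-1)=0$ in $C_0(\De_Q)=\MZ[A_n/L_Q]$, which only yields $\bar g\in L_Q$. The boundary map discards exactly the information that separates $\bar g$ from its $L_Q$-coset, so I would replace it by a direction-counting map.

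Concretely, for a vertex $v\in A_n$ and an index $i$ let $e_{v,i}$ denote the oriented edge of $\Ga_n$ from $v$ to $v\bar a_i$, and define $\lambda\colon C_1(\Ga_n)\to A_n$ on this basis by $\lambda(e_{v,i})=\bar a_i$, independently of $v$. The first thing to check is that $\lambda(\si(g))=\bar g$ for every $g\in M_n$: tracing the path $p_g$ for a word $a_{i_1}^{\epsilon_1}\cdots a_{i_k}^{\epsilon_k}$ representing $g$, the letter $a_{i_j}^{\epsilon_j}$ contributes $\epsilon_j$ copies of an edge of direction $i_j$ to $\si(g)$, whence $\lambda(\si(g))=\bar a_{i_1}^{\epsilon_1}\cdots\bar a_{i_k}^{\epsilon_k}=\bar g$. (The same identity follows by induction from \eqref{eq:sigma-of-product}, using that $\lambda$ is invariant under the shift action of $A_n$.)

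It remains to descend $\lambda$ through $\ga_Q$. The subgroup $L_Q$ acts on $\Ga_n$ by translations, and translation sends $e_{v,i}$ to $e_{lv,\,i}$, preserving both direction and orientation; hence $\lambda$ is constant on each $L_Q$-orbit of edges. Since an edge of $\De_Q$ is exactly such an orbit, and all edges in one orbit share the single direction $i$, the assignment on orbits is unambiguous and yields a homomorphism $\bar\lambda\colon C_1(\De_Q)\to A_n$ with $\bar\lambda\circ\ga_Q=\lambda$. Applying $\bar\lambda$ to $\tau_Q(g)=\ga_Q(\si(g))$ gives $\bar\lambda(\tau_Q(g))=\lambda(\si(g))=\bar g$, so $\tau_Q(g)=0$ forces $\bar g=1$, that is, $g\in M_n'$.

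The only delicate point is this descent step: one must be sure the $L_Q$-action never merges edges of different directions nor reverses an orientation, so that $\bar\lambda$ is well defined on the free abelian group $C_1(\De_Q)$. This holds precisely because $L_Q$ acts by translations, and it is exactly where the argument improves on the boundary computation, since $\lambda$ takes values in $A_n$ and thus retains the full direction count, whereas $\bd_1$ can only see it modulo $L_Q$.
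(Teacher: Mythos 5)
Your proof is correct, but it follows a genuinely different route from the paper's. The paper first uses $\bd_1$ to get $\bar g\in L_Q$, then switches to topology: since $U_n$ is the universal cover of $U_n/L_Q$ and $L_Q$ is abelian, the Hurewicz map $\pi_1(U_n/L_Q)\to H_1(U_n/L_Q)$ is an isomorphism, so the loop $\ga_Q(p_{\hat g})$ is null-homotopic, its lift closes up, and $\si(g)$ is a cycle. You instead build the translation-invariant ``direction-counting'' homomorphism $\lambda\colon C_1(\Ga_n)\to A_n$, $\lambda(e_{v,i})=\bar a_i$, check $\lambda(\si(g))=\bar g$, and observe that invariance under the $L_Q$-translations lets $\lambda$ descend to $C_1(\De_Q)$; this is purely combinatorial and bypasses covering-space theory entirely. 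Your well-definedness check is the right one and is sound: translations preserve the direction index $i$, and no edge can be identified with its own reverse since that would force $2\bar a_i=0$ in the torsion-free group $A_n$. What your argument buys is a shorter, self-contained proof that also makes transparent exactly why $\bd_1$ alone is too coarse (it only sees $\bar g$ modulo $L_Q$). What the paper's approach buys is that the topological setup --- the contractible cubical complex $U_n$, its quotient $U_n/L_Q$, and the identification of kernels via $H_2$ --- is the machinery reused immediately afterwards in Proposition \ref{prop:trace-kernel}, so the authors get the lemma essentially for free from infrastructure they need anyway.
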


\begin{proof}
Let $g$ be any element of $M_n$ such that $\tau_Q(g) = 0$.
We have $\tau_Q(g) = \ga_Q(\si(g))$ where by definition, $\si(g)$ is the 1-chain in $\Ga_n$ defined
by a path $p_{\hat g}$ starting at 1 and labeled by a word $\hat g \in F_n$ representing $g$.
Since $\bd_1$ commutes with $\ga_Q$ we have
$0 = \bd_1 \tau_Q(g) = \ga_Q(\bar g) - 1$,
that is, $\bar g \in L_Q$. This implies that $\ga_Q(p_{\hat g})$ is a loop in $\De_Q$ and
hence $0 = \tau_Q(g)$ is the image of $\ga_Q(p_{\hat g})$ under the epimorphism $\pi_1(U_n/L_Q) \to H_1(U_n/L_Q)$.
Now observe that $U_n$ is the universal cover of $U_n/L_Q$,
hence $\pi_1(U_n/L_Q) \simeq L_Q$ is abelian and
the epimorphism $\pi_1(U_n/L_Q) \to H_1(U_n/L_Q)$ is in fact an isomorphism.
Then $\ga_Q(p_{\hat g})$ represents the trivial element of the fundamental group $\pi_1(U_n/L_Q)$
and its lift $\hat g$ is a loop and hence $\si(g)$ is a cycle.
\end{proof}

For a word $W$ of the form (\ref{eq:spherical}) consider the following two series of elements in $M_n$:
\begin{itemize}
    \item[(a)]
$[[a_i, a_j]^h, c_k]$ for all $h \in M_n$, $1\le i,j\le n$ and $1\le k\le m$;
    \item[(b)]
$[c_i, c_j]$ for all $1\le i,j\le m$.
\end{itemize}
Let $H_Q$ be the subgroup generated by elements (a) and (b). Note that $H_Q$ is an abelian.
It is easy to check that for any values $\alpha,\beta,\gamma = \pm1$ an element
$[[a_i^\alpha,a_j^\beta]^u,c_k]^\gamma$ is also an element of the form (a).
For instance,
$[[a_i,a_j]^u,c_k]^{-1} = [[a_j,a_i]^u,c_k]$.

\begin{proposition} \label{prop:trace-kernel}
$\tau_Q(g) = 0$ if and only if $g\in H_Q$.
\end{proposition}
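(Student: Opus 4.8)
The plan is to pass to homology of the graph $\Ga_n$ and its quotient $\De_Q=\Ga_n/L_Q$. Two facts drive everything. First, by the contractibility of $U_n$ used in Lemma~\ref{le:tau_w}, the map $\si$ identifies $M_n'$ with $Z_1(\Ga_n)$, and since $H_1(U_n)=0$ one has $Z_1(\Ga_n)=\bd_2 C_2(U_n)$; moreover $H_2(U_n)=0$. Second, from the cocycle identity \eqref{eq:sigma-of-product} a direct computation gives
\[
\si([g,h]) = (1-\bar h)\,\si(g)-(1-\bar g)\,\si(h),
\]
and $\ga_Q$ is a map of $\MZ[A_n]$-modules whose scalars act through $\MZ[A_n/L_Q]$, so $\ga_Q$ kills $(1-\ell)x$ for every $\ell\in L_Q$. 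Write $I_Q\seq\MZ[A_n]$ for the kernel of $\MZ[A_n]\to\MZ[A_n/L_Q]$, the ideal generated by the $\bar c_k-1$; then $\ker\ga_Q=I_Q\,C_r(\Ga_n)$ in each degree $r$. Finally, because $\bar g=1$ for $g\in M_n'$, the restriction of $\tau_Q$ to $M_n'$ is a homomorphism, so $\set{g:\tau_Q(g)=0}$ is a subgroup.

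For the implication $g\in H_Q\Rightarrow\tau_Q(g)=0$ it suffices to treat the generators. For type (b), $\si([c_i,c_j])=(1-\bar c_j)\si(c_i)-(1-\bar c_i)\si(c_j)$ is annihilated by $\ga_Q$ since $\bar c_i,\bar c_j\in L_Q$. For type (a), put $w=[a_i,a_j]^h$; then $w\in M_n'$, so $\bar w=1$ and the commutator formula collapses to $\si([w,c_k])=(1-\bar c_k)\si(w)$, again killed by $\ga_Q$. Hence $H_Q\seq\ker\tau_Q$.

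For the converse, let $\tau_Q(g)=0$. By Lemma~\ref{le:tau_w} $g\in M_n'$, so $z:=\si(g)\in Z_1(\Ga_n)=\bd_2 C_2(U_n)$; fix $\xi\in C_2(U_n)$ with $\bd_2\xi=z$. From $\ga_Q(z)=\tau_Q(g)=0$ we get $\ga_Q(\xi)\in Z_2(\De_Q)$, and its class $\Phi(z):=[\ga_Q(\xi)]\in H_2(\De_Q)$ does not depend on $\xi$, because $H_2(U_n)=0$ makes two choices differ by a $3$-boundary; additivity shows $\Phi$ is a homomorphism on $Z_1(\Ga_n)\cap\ker\ga_Q$. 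The key reduction is that $\Phi(z)=0$ forces $z\in I_Q\,\bd_2C_2(U_n)$: if $\ga_Q(\xi)=\bd_3^{\De}\eta$, lift $\eta=\ga_Q(\ti\eta)$ and replace $\xi$ by $\xi-\bd_3\ti\eta\in\ker\ga_Q=I_QC_2(U_n)$, which still has boundary $z$, whence $z\in\bd_2(I_QC_2(U_n))=I_Q\,\bd_2C_2(U_n)$. Now let $H_a$ be the subgroup generated by the type (a) elements; since $\si([a_i,a_j]^h)=\bar h^{-1}\bd_2(s_{ij})$ for the unit square $s_{ij}$ in the $(i,j)$-directions, and these range over an $\MZ[A_n]$-generating set of $\bd_2C_2(U_n)$, the extra factor $1-\bar c_k$ gives exactly $\si(H_a)=I_Q\,\bd_2C_2(U_n)$. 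Thus $\ker\Phi=\si(H_a)$.

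It remains to kill $\Phi(z)$ by type (b) elements. As in Lemma~\ref{le:tau_w}, $U_n$ is the universal cover of $\De_Q$ with deck group $L_Q\isom\MZ^r$, so $\De_Q$ is a $K(L_Q,1)$ and $H_2(\De_Q)\isom\bigwedge^2 L_Q$, generated by the wedges $\bar c_i\wedge\bar c_j$ (as the $\bar c_k$ generate $L_Q$). The decisive point is that $[c_i,c_j]$ realizes such a generator: the cycle $\si([c_i,c_j])$ bounds the $2$-chain $\xi_{ij}$ spanned by the paths $\si(c_i),\si(c_j)$, whose projection $\ga_Q(\xi_{ij})$ is the torus swept out by the loops $\ga_Q\si(c_i)$ and $\ga_Q\si(c_j)$, so that $\Phi(\si([c_i,c_j]))=\pm\,\bar c_i\wedge\bar c_j$ under the Pontryagin/Künneth identification $\bigwedge^2 H_1(\De_Q)\isom H_2(\De_Q)$. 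Granting this, the values $\Phi(\si([c_i,c_j]))$ generate $H_2(\De_Q)$, so the restriction of $\Phi$ to the subgroup $H_b$ generated by the type (b) elements is onto $\im\Phi$; pick $b\in H_b$ with $\Phi(\si(b))=\Phi(z)$. Then $\Phi(z-\si(b))=0$, so $z-\si(b)\in\ker\Phi=\si(H_a)$, say $z-\si(b)=\si(a)$ with $a\in H_a$, and since $\si$ is an injective homomorphism on $M_n'$, $z=\si(ab)$ gives $g=ab\in H_Q$. The main obstacle is precisely the computation $\Phi(\si([c_i,c_j]))=\pm\,\bar c_i\wedge\bar c_j$: one must pin down the isomorphism $H_2(\De_Q)\isom\bigwedge^2 L_Q$ and verify that the explicit $2$-chain filling the commutator loop projects to the corresponding $2$-torus generator; the remaining ingredients—the cocycle bookkeeping for $\si$, the lifting of $3$-chains, and the vanishing $H_1(U_n)=H_2(U_n)=0$—are routine once the cubical (Koszul) picture of $C_*(U_n)\to C_*(\De_Q)$ is in place.
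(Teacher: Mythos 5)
Your proof is correct and follows essentially the same route as the paper's: both reduce to computing $\ker\bigl(Z_1(U_n)\to Z_1(U_n/L_Q)\bigr)$ by splitting it into the part coming from $\ker\bigl(C_2(U_n)\to C_2(U_n/L_Q)\bigr)$, which yields the type (a) generators, and the part detected by $H_2(U_n/L_Q)\cong\bigwedge^2 L_Q$, which yields the type (b) generators; your homomorphism $\Phi$ is just a cleaner packaging of the paper's diagram chase through the commutative square relating $\bd_2$ and $\ga_*$. The one step you defer with ``granting this''---that the commutator $[c_i,c_j]$ realizes the generator $\bar c_i\wedge\bar c_j$ of $H_2(U_n/L_Q)$ via the swept-out torus---is precisely the point the paper itself settles only by a brief appeal to embedded tori $S^1\times S^1$ in the product decomposition of $U_n/L_Q$, so your argument is no less complete than the original.
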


\begin{proof}
it is easy to check that $\tau_Q(g) = 0$ for every generator $g$ of $H_Q$.
Hence, sufficiency holds.

Now, assume that $\tau_Q(g) = 0$.
It follows from Lemma \ref{le:tau_w} that $g\in M_n'$. Hence $g$ can be identified
with the corresponding 1-cycle $\si(g) \in Z_1(U_n)$. Obviously, the set
    $\set{\si(g)\mid \tau_Q(g)=0}$
forms a subgroup of $Z_1(U_n)$ which is the kernel of
the homomorphism
    $\ga_*:Z_1(U_n) \to Z_1(U_n/L_Q)$
induced by the projection $\ga:U_n\to U_n/L_Q$. Our goal is to compute $\ker(\ga_*)$. To do this, we first compute
the kernel of $C_2(U_n) \overset{\bd_2}\too Z_1(U_n) \overset{\ga_*}\too Z_1(U_n/L_Q)$ and then take its image in $Z_1(U_n)$.
Since we have a commutative diagram
$$
  \xymatrix{
    C_2(U_n) \ar[r]^{\bd_2} \ar[d]^{\ga_*} &
    Z_1(U_n) \ar[d]^{\ga_*} \\
    C_2(U_n/L_Q) \ar[r]^{\bd_2} &
    Z_1(U_n/L_Q)
  }
$$
the kernel of $C_2(U_n) \overset{\bd_2\ga_*}\too Z_1(U_n/L_Q)$ is
generated by $\ker(C_2(U_n) \to C_2(U_n/L_Q))$ and a set of preimages of generators for
$\ker(C_2(U_n/L_Q) \overset{\bd_2}\too Z_1(U_n/L_Q))$.

Clearly, $\ker(C_2(U_n) \to C_2(U_n/L_Q))$ is generated by all 2-chains of the form
$$
  s - \bar c_k s, \quad s \in U_n^{(2)}.
$$
For the kernel of $C_2(U_n/L_Q) \overset{\bd_2}\too Z_1(U_n/L_Q)$, we have two types of generators.
First, there are generators for the image of $\bd_3: C_3(U_n/L_Q) \to C_2(U_n/L_Q))$.
These are boundaries $\bd_3 E$ of all 3-cubes $E \in (U_n/L_Q)^{(3)}$.
These generators vanish when mapped to $Z_1(U_n)$.
Second, there are the generators for $H_2(U_n/L_Q)$ which can be easily computed as follows.

As a topological space, $U_n/L_Q$ is homeomorphic to the direct product of a finite number of copies
of the real line $\Reals$ and circles $S^1$. Clearly, $H_2(U_n/L_Q)$ is generated by all embedded tori
$S^1 \times S^1$ for all distinct factors $S^1$. Each factor $S^1$ represents a 1-cycle
 $\ga(\rho)$ where $\rho$ is a linear combination of 1-cycles $\tau_Q(c_i)$. This implies
that $H_2(U_n/L_Q)$ is generated by the images under $\ga$ of all 2-cycles in $U_n$
with boundaries $\si([c_i,c_j])$.

Summarizing, we get the following set of generators for  $\ker(Z_1(U_n) \overset\ga\to Z_1(U_n/L_Q))$:
boundaries of all 2-cycles $s - \bar c_k s$, $s \in U_n^{(2)}$ and all 1-cycles of the form $\si([c_i,c_j])$.
The boundaries of 2-cubes $s\in U_n^{(2)}$ are of the form $\si([a_i, a_j]^h)$, $h \in M_n$
and we have
$$
  \bd_2 (s - \bar c_k s) = \si([a_i, a_j]^h) - \si(c_k [a_i, a_j]^h c_k^{-1})
  = \si([[a_i, a_j]^h, c_k]).
$$
This gives elements (a). The 1-cycles $\si([c_i,c_j])$ give elements (b).
Thus, necessity holds.
\end{proof}

\begin{lemma}\label{le:two_comm}
Let $H_Q'$ be the subgroup of $M_n$ generated by all elements (a). Then
$[[c_i,c_j],h] \in H_Q'$ for any $1\le i,j\le m$ and $h\in M_n$.
\end{lemma}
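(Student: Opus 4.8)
The plan is to reduce the statement to two facts: that the commutator of \emph{any} element of $M_n'$ with a constant $c_k$ already lies in $H_Q'$, and that the Hall--Witt identity lets us trade the awkward outer variable $h$ for the constants $c_i,c_j$. First I would record that $M_n'$ is generated, as an abelian group, by the conjugates $[a_i,a_j]^u$, $u\in M_n$. This is exactly the identification used in the proof of Proposition~\ref{prop:trace-kernel}: via $\si$ we have $M_n'\isom Z_1(\Ga_n)$, and since $U_n$ is contractible every $1$-cycle is a boundary $\bd_2$ of a $2$-chain, while $C_2(U_n)$ is spanned by the $2$-cubes $s$, whose boundaries are the $\si([a_i,a_j]^u)$. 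Consequently, for $w\in M_n'$ I may write $w=\prod_l([a_{i_l},a_{j_l}]^{u_l})^{\varepsilon_l}$; because $w$ and all factors lie in the abelian group $M_n'$, the map $x\mapsto[x,c_k]$ is a homomorphism on $M_n'$ (indeed $[xy,c_k]=[x,c_k][y,c_k]$ and $[x^{-1},c_k]=[x,c_k]^{-1}$ there, as elements of $M_n'$ conjugate trivially on $M_n'$), whence
$$
  [w,c_k]=\prod_l\bigl[[a_{i_l},a_{j_l}]^{u_l},c_k\bigr]^{\varepsilon_l}\in H_Q'
$$
for every $w\in M_n'$ and every $k$.

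Next I would verify that $H_Q'$ is normal in $M_n$, which is what lets conjugated factors be absorbed. The relevant metabelian fact is that for $u\in M_n'$ the commutator $[u,s]$ depends only on the image $\bar s\in A_n$; since $\overline{c_k^{\,g}}=\bar c_k$, this gives $[u,c_k^{\,g}]=[u,c_k]$. Combining with $([a_i,a_j]^u)^g=[a_i,a_j]^{ug}$ yields $\bigl([[a_i,a_j]^u,c_k]\bigr)^g=[[a_i,a_j]^{ug},c_k]$, again of type (a), so $H_Q'$ is normal. I then apply the Hall--Witt identity to $x=c_i$, $y=c_j$, $z=h$, which expresses a conjugate of $[[c_i,c_j^{-1}],h]$ as a product of conjugates of two triple commutators of the shape $[[c_j,h^{-1}],c_i]$ and $[[h,c_i^{-1}],c_j]$. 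The inner entries $[c_j,h^{-1}]$ and $[h,c_i^{-1}]$ lie in $M_n'$, so by the first step both triple commutators lie in $H_Q'$, and by normality so do their conjugates; hence the remaining factor, a conjugate of $[[c_i,c_j^{-1}],h]$, also lies in $H_Q'$, and $[[c_i,c_j^{-1}],h]\in H_Q'$.

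It remains to pass from $c_j^{-1}$ back to $c_j$. Here I would use $[c_i,c_j^{-1}]=\bigl([c_i,c_j]^{-1}\bigr)^{c_j^{-1}}$ together with the module formula $\si([v,h])=(\bar h^{-1}-1)\si(v)$ for $v\in M_n'$: writing $v=[c_i,c_j]$, one computes $\si([[c_i,c_j^{-1}],h])=-\bar c_j\,\si([[c_i,c_j],h])$, so $[[c_i,c_j^{-1}],h]$ is a conjugate of the inverse of $[[c_i,c_j],h]$. Since $H_Q'$ is normal and closed under inversion, membership transfers and $[[c_i,c_j],h]\in H_Q'$, as required.

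I expect the only delicate point to be the bookkeeping of inverses and conjugations in the Hall--Witt step: the identity naturally produces $c_j^{-1}$ rather than $c_j$ and inserts conjugating factors, so the argument genuinely relies on the metabelian structure—that $M_n'$ is abelian and that conjugation on it factors through $A_n$—to see that none of these decorations can move an element out of $H_Q'$. A purely homological alternative would exhibit an explicit $2$-chain $T_{ij}$ with $\bd_2 T_{ij}=\si([c_i,c_j])$ and check that $(\bar h^{-1}-1)T_{ij}$ lies in the relative ideal $\sum_k(\bar c_k-1)\Ints[A_n]\cdot C_2(U_n)$ whose $\bd_2$-image is $\si(H_Q')$; this also works but requires constructing $T_{ij}$ by hand, which is messier than the commutator calculus above.
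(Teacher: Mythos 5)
Your proof is correct and follows essentially the same route as the paper's: both arguments reduce the claim to the two facts that $H_Q'$ is normal in $M_n$ and that $[w,c_k]\in H_Q'$ for every $w\in M_n'$, and then split $[[c_i,c_j],h]$ into two triple commutators having a constant $c_k$ in the outer slot. The only difference is that the paper invokes the metabelian identity \eqref{eq:id2}, $[[x,y],z]=[[x,z],y]\cdot[[z,y],x]$, which yields the splitting of $[[c_i,c_j],h]$ directly, whereas your use of the general Hall--Witt identity forces the extra (correct, but avoidable) cleanup of the conjugating decorations and of the passage from $c_j^{-1}$ back to $c_j$.
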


\begin{proof}
Observe that the following identities hold in $M_n$:
\begin{gather}
\label{eq:id1} [u v,x] = [u,x][v,x],\\
\label{eq:id2} [[x,y],z] = [[x,z],y]\cdot [[z,y],x],
\end{gather}
for any $u, v \in M_n'$ and $x, y, z \in M_n$.
Since $M_n'$ is abelian, we have $[u, h] = [u, h^g]$ for any $u \in M_n'$ and $g,h \in M_n$.
This obviously implies that $H_Q'$ is a normal subgroup of $M_n$. Furthermore, by  (\ref{eq:id1})
we see that $H_Q'$ contains all elements $[u,c_i]$ where $u \in M_n'$.
Finally, it follows from (\ref{eq:id2}) that
$[[c_i,c_j],h] = [[c_i,h],c_j]\cdot [[h,c_j],c_i].$
\end{proof}

Note that $\tau_Q(u_i c_i u_i^{-1})$ is a $1$-cycle in $\De_Q$ obtained by the shift
of $\tau_Q(c_i)$  by the element ~$\bar u_i$ (under the action of $A_n$).
The next proposition states
that deciding if a given spherical equation has a solution
is equivalent to finding shifts of the $1$-cycles $\tau_Q(c_i)$  in $\De_Q$ with trivial algebraic sum.

\begin{proposition} \label{pr:main-reduction}
The equation $W=1$ has a solution if and only if
there exist elements $u_1,u_2,\dots,u_m \in M_n$ satisfying
\begin{equation}\label{eq:shift_sum}
  \sum_{i=1}^m \tau_Q(u_i c_i u_i^{-1}) = 0.
\end{equation}
\end{proposition}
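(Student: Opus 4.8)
The plan is to translate a solution of $W=1$ into a choice of conjugators and then exploit an additivity of $\tau_Q$ that is forced by the hypothesis $\bar c_i\in L_Q$. A solution is determined by the images $u_i=\al(z_i)\in M_n$, so solving $W=1$ amounts to finding $u_1,\dots,u_m$ with $\prod_{i=1}^m u_i c_i u_i^{-1}=1$. The first step I would carry out is the identity $\tau_Q\bigl(\prod_i u_i c_i u_i^{-1}\bigr)=\sum_i \tau_Q(u_i c_i u_i^{-1})$. Writing $g_i=u_i c_i u_i^{-1}$ and iterating (\ref{eq:sigma-of-product}) gives $\si$ of the product as $\sum_i (\bar g_1\cdots\bar g_{i-1})\si(g_i)$; since $\bar g_i=\bar c_i\in L_Q$, every shift $\bar g_1\cdots\bar g_{i-1}$ lies in $L_Q$ and hence acts trivially on $\Delta_Q=\Gamma_n/L_Q$, so $\ga_Q$ kills it and only the sum of the $\tau_Q(g_i)$ survives. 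Granting this identity, the forward implication is immediate: if $\prod_i u_i c_i u_i^{-1}=1$ then $\sum_i\tau_Q(u_i c_i u_i^{-1})=\tau_Q(1)=0$.

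For the converse, let $S=\setof{\prod_i u_i c_i u_i^{-1}}{u_i\in M_n}$ be the set of attainable products. By the additivity identity together with Proposition~\ref{prop:trace-kernel}, condition (\ref{eq:shift_sum}) says exactly that $S\cap H_Q\neq\eset$, whereas a solution exists iff $1\in S$. Thus it suffices to prove $H_Q\cdot S\subseteq S$: then any $P\in S\cap H_Q$ yields $1=P^{-1}P\in H_Q\cdot S=S$. Since $H_Q$ is generated by the elements (a) and (b), whose inverses are again of the same form, I would reduce further to showing that $sP\in S$ for every $P=\prod_i u_i c_i u_i^{-1}\in S$ and every generator $s$; the general case follows by induction on the length of a word in the generators representing $h\in H_Q$.

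Each $sP$ is to be realized by modifying a single conjugator. The elementary computation behind the whole argument is that replacing $u_k$ by $wu_k$ turns the product into ${}^A[w,g_k]\,P$, where $A=g_1\cdots g_{k-1}$ and $[w,g_k]\in M_n'$. For a generator of type (a) I take $w=v:=[a_i,a_j]^h\in M_n'$: then $[v,g_k]=[v,c_k]$, because conjugating the second entry of a commutator with first entry in $M_n'$ does not matter, and conjugation by $A$ keeps the result inside the normal abelian subgroup $H_Q'$, again a generator of type (a). Letting $h$ range over $M_n$ realizes left multiplication by an arbitrary type (a) generator, hence, by the same inductive remark, by any element of $H_Q'$.

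The hard part will be type (b), since a single modification only produces type (a) elements directly; the point is to produce $[c_i,c_j]$ modulo $H_Q'$. Here I would take $w=c_i$ and $k=j$. Writing $g_j=c_j d_j$ with $d_j=c_j^{-1}u_j c_j u_j^{-1}\in M_n'$ and using (\ref{eq:id1})--(\ref{eq:id2}), a direct commutator expansion gives $[c_i,g_j]=[c_i,c_j]\cdot f$ with $f\in(\bar c_i-1)M_n'\subseteq H_Q'$, while conjugation by $A$ alters $[c_i,c_j]$ only by a factor $(\bar A-1)[c_i,c_j]$, which lies in $H_Q'$ precisely by Lemma~\ref{le:two_comm}. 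Hence the modification produces $[c_i,c_j]\,s'\,P$ with $s'\in H_Q'$, and composing with type (a) moves to cancel $s'$ leaves exactly $[c_i,c_j]\,P\in S$; the inverse generator $[c_j,c_i]$ is handled symmetrically. This gives $H_Q\cdot S\subseteq S$ and completes the converse.
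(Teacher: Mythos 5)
Your proposal is correct and follows essentially the same strategy as the paper: the forward direction via additivity of $\tau_Q$ on the product (since the intermediate shifts lie in $L_Q$), and the converse by showing the attainable set $S$ is a union of $H_Q$-cosets through single-conjugator modifications, then invoking Proposition~\ref{prop:trace-kernel}. The only differences are cosmetic bookkeeping (you multiply conjugators on the left and track the conjugation by the prefix $A$ explicitly, where the paper absorbs it by inserting a compensating factor $d$ into the modified conjugator).
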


\begin{proof}
For $\ti g = (g_1,\dots,g_m) \in (M_n)^m$ define:
$$
  W(\ti g) = g_1 c_1 g_1^{-1} g_2c_2 g_2^{-1}  \dots  g_mc_m g_m^{-1} .
$$
Let $\ti u$ be a solution of the equation $W=1$, i.e.\ we have $W(\ti u) = 1$. 
By \eqref{eq:sigma-of-product},
\begin{equation} \label{eq:sigma-of-W}
  0 = \si(W(\ti u))
  = \si(u_1 c_1 u_1^{-1}) + \bar h_1 \si(u_2 c_2 u_2^{-1}) + \dots + \bar h_{m-1} \si(u_m c_m u_m^{-1})
\end{equation}
where $h_i = u_1 c_1 u_1^{-1} \dots u_i c_i u_i^{-1}$. By the definition of $\De_Q$, shifting
of a chain $t \in C_1(\De_Q)$ by $h \in L_Q$ does not change $t$. Hence
$$
  \bar h_{i-1} \tau_Q (u_i c_i u_i^{-1}) = \tau_Q (u_i c_i u_i^{-1})
$$
and applying $\ga_*$ to the both sides of \eqref{eq:sigma-of-W} we get \eqref{eq:shift_sum}.
This proves the ``$\Rightarrow$'' part.

Now assume that $u_1,u_2,\dots,u_m \in M_n$ satisfy (\ref{eq:shift_sum}).
Define a set:
$$
  S = S_W = \setof{W(\ti g)}{\ti g \in (M_n)^m}.
$$
Clearly, $W=1$ has a solution if and only if $1\in S$. We claim that
    $$S H_Q \seq S,$$
i.e.\ $S$ is a union of $H_Q$-cosets.
It is sufficient to prove that $Sh \seq S$ for any element ~$h$ of the form (a) or (b).

For any $\ti g = (g_1, g_2, \dots, g_m)$ and $h \in M_n$ we have
$$
  W(g_1,\ldots,g_{k-1},g_k[a_j,a_i]^{hd},g_{k+1},\ldots,g_m)= W(\ti g) \cdot [[a_i,a_j]^h,c_k]
$$
where $d = (g_{k+1} c_{k+1} g_{k+1}^{-1} \dots g_m c_m g_m^{-1})^{-1}$.
This proves $S H_Q' \subseteq S$.
Similarly,
$$
  W(g_1,\ldots,g_{j-1},c_i g_j,g_{j+1},\ldots,g_m)= W(\ti g) \cdot [c_i,c_j] \cdot [[c_i,c_j],f]
$$
where $f = g_j (g_{j+1} c_{j+1} g_{j+1}^{-1} \dots g_m c_m g_m^{-1})^{-1}.$
By Lemma \ref{le:two_comm}, $[[c_j,c_i],f] \in H_Q'$.
Using the already proved fact that $S H_Q' \subseteq S$ we see that $S h \seq S$ for any $h$
of the form (b) and hence $S H_Q = S$ as required.

As we have seen in the proof of ``$\Rightarrow$'', equality (\ref{eq:shift_sum})
can be rewritten as
$$
  \tau_Q(u_1 c_1 u_1^{-1} \dots u_m c_m u_m^{-1}) = 0.
$$
Hence, by Proposition \ref{prop:trace-kernel}, 
$u_1 c_1 u_1^{-1} \dots u_m c_m u_m^{-1} \in H_Q \cap S$. 
Now it follows from the equality $SH_Q=S$ that $1 \in S$.
Therefore, $W = 1$ has a solution.
\end{proof}

\begin{corollary} \label{co:main-reduction-improved}
The equation $W=1$ is solvable if and only if there exists a tuple
$\ti \al = (\al_1, \al_2, \dots, \al_m) \in (A_m)^n$ such that
\begin{equation} \label{eq:lattice-equation}
  \al_1 \tau_Q(c_1) + \al_2 \tau_Q(c_2) + \dots + \al_m \tau_Q(c_m) = 0.
\end{equation}
\end{corollary}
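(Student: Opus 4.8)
The plan is to deduce the corollary directly from Proposition~\ref{pr:main-reduction}, whose condition \eqref{eq:shift_sum} already characterizes solvability of $W=1$. The crux is to show that the $1$-cycle $\tau_Q(u c u^{-1})$ depends on the conjugating element $u$ only through its image $\bar u \in A_n$, acting by the shift action on $\tau_Q(c)$; concretely, I would establish the identity
$$
  \tau_Q(u c u^{-1}) = \bar u\,\tau_Q(c)
  \qquad \text{for all } u \in M_n,\ c \in Q.
$$

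First I would compute $\si(u c u^{-1})$ using \eqref{eq:sigma-of-product} together with the relation $\si(u^{-1}) = -\bar u^{-1}\si(u)$, which follows from applying \eqref{eq:sigma-of-product} to $\si(u u^{-1}) = 0$. Since $A_n$ is abelian this yields
$$
  \si(u c u^{-1}) = \bar u\,\si(c) + (1 - \bar c)\,\si(u).
$$
Applying the $A_n$-equivariant projection $\ga_Q$ and recalling $\ga_Q(\si(\cdot)) = \tau_Q(\cdot)$ gives $\tau_Q(u c u^{-1}) = \bar u\,\tau_Q(c) + (1 - \bar c)\,\tau_Q(u)$.

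The key step is then to kill the second summand. By the definition of $L_Q$ we have $\bar c \in L_Q$, and shifting a chain of $\De_Q = \Ga_n/L_Q$ by an element of $L_Q$ leaves it unchanged --- exactly the observation already used in the proof of Proposition~\ref{pr:main-reduction}. Hence $\bar c\,\tau_Q(u) = \tau_Q(u)$, so $(1 - \bar c)\,\tau_Q(u) = 0$, and the displayed identity follows. I expect this cancellation to be the only substantive point; everything else is bookkeeping.

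Finally I would substitute the identity into \eqref{eq:shift_sum} to rewrite it as $\sum_{i=1}^m \bar u_i\,\tau_Q(c_i) = 0$. Because the natural map $M_n \to A_n$, $u \mapsto \bar u$, is surjective, prescribing the images $\bar u_i$ is the same as prescribing arbitrary elements $\al_i \in A_n$: from elements $u_i$ satisfying \eqref{eq:shift_sum} one sets $\al_i = \bar u_i$ to obtain \eqref{eq:lattice-equation}, and conversely each $\al_i \in A_n$ lifts to some $u_i \in M_n$ with $\bar u_i = \al_i$ for which \eqref{eq:shift_sum} holds. Combining this equivalence with Proposition~\ref{pr:main-reduction} yields the corollary.
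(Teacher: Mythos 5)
Your proposal is correct and follows essentially the same route as the paper: both reduce the corollary to Proposition~\ref{pr:main-reduction} via the identity $\tau_Q(u_i c_i u_i^{-1}) = \bar u_i\,\tau_Q(c_i)$ and then use surjectivity of $M_n \to A_n$. The only difference is that you actually derive this identity from \eqref{eq:sigma-of-product} and the triviality of the $L_Q$-action on $C_1(\De_Q)$, whereas the paper asserts it with the terse remark that $\tau_Q(c_i)$ is a $1$-cycle in $\De_Q$ (which is the same fact, since $\bar c_i \in L_Q$); your version is a welcome elaboration, not a different argument.
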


\begin{proof}
As observed above, the condition $\sum_{i=1}^m \tau_Q(u_i c_i u_i^{-1}) = 0$ in Proposition \ref{pr:main-reduction}
can be rewritten as
$$
  \tau_Q(u_1 c_1 u_1^{-1}) + \tau_Q(u_2 c_2 u_2^{-1}) + \dots + \tau_Q(u_m c_m u_m^{-1}) = 0.
$$
It remains to notice that $\tau_Q (u_i c_i u_i^{-1}) = \bar u_i \tau_Q (c_i)$ since $\tau_Q(c_i)$ is
a 1-cycle in $\De_Q$.
\end{proof}

{\em Remark:} It is not hard to see from the proof of Proposition \ref{pr:main-reduction} that tuples $\ti \al$
represent solutions of the equation $W=1$ in the following way: given a tuple $\ti\al$ satisfying
\eqref{eq:lattice-equation}, there exists a solution ~$\ti u$ of $W=1$ such that $u_i \in \al_i L_Q$,
$i=1,2,\dots,m$.

Our next step is to give an effective bound on the size of a tuple $\ti\al$ satisfying \eqref{eq:lattice-equation}.
For $\ga \in A_n$ by $|\ga|$ we denote the word length of $\ga$ in
the generators $\set{\bar a_1, \bar a_2, \dots, \bar a_n}$ of $A_n$.
For $x \in M_n$, $|x|$ denotes the word length of $x$ in the generators $\set{a_1,a_2,\dots,a_n}$.

\begin{proposition}
If a tuple $\ti \al$ satisfies \eqref{eq:lattice-equation}, then there exists another tuple
$\ti\al' = (\al_1',\al_2',\dots,\al_m') \in (A_n)^m$ satisfying \eqref{eq:lattice-equation} such that
$|\al_i'| \le 2 \sum_j |c_j|$ for all $i=1,\ldots,m$.
\end{proposition}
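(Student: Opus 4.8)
The plan is to work geometrically in the quotient graph $\De_Q$ and to exploit the fact, already used in the proof of Corollary \ref{co:main-reduction-improved}, that \eqref{eq:lattice-equation} depends only on the cosets $\bar\al_i + L_Q$. Concretely, the equation is invariant under two operations: (i) replacing any single $\bar\al_i$ by $\bar\al_i+\ell$ with $\ell\in L_Q$ (since shifting a $1$-chain of $\De_Q$ by an element of $L_Q$ is trivial), and (ii) adding one common element to all of $\bar\al_1,\dots,\bar\al_m$ (since this multiplies the left-hand side, which is the zero chain, by an element of $A_n$). These two freedoms are the only source of flexibility I will use to shrink the shifts.

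First I would localize the supports. Writing each $c_i$ by a geodesic word, the chain $\si(c_i)$ is carried by a path of length $|c_i|$ issuing from $1$, so $\supp\si(c_i)$ is a connected edge set of $\diam$ at most $|c_i|$ in $\Ga_n$ (and is empty when $c_i=1$). I discard the indices with $\tau_Q(c_i)=0$, putting the corresponding $\al_i'=0$; for each surviving index the set $A_i:=\ga_Q\big(\bar\al_i+\supp\si(c_i)\big)$ is then a connected subgraph of $\De_Q$ of $\diam$ at most $|c_i|$ carrying $\bar\al_i\tau_Q(c_i)$, and it contains the vertex $\ga_Q(\bar\al_i)$.

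Next I would organize the terms by overlap. I form the graph on the surviving indices in which $i\sim j$ whenever $A_i\cap A_j\neq\eset$, and let $C$ be one of its connected components. Distinct components have disjoint edge sets, hence disjoint chain supports, so in \eqref{eq:lattice-equation} the partial sum over each component must already vanish; in particular no surviving term is isolated, since $\bar\al_i\tau_Q(c_i)=0$ forces $\tau_Q(c_i)=0$. Because the $A_i$ for $i\in C$ are connected and overlap-connected, their union has $\diam$ at most $D_C:=\sum_{i\in C}|c_i|\le\sum_j|c_j|$. Fixing a basepoint $i_0\in C$, the start vertices satisfy $\operatorname{dist}_{\De_Q}\!\big(\ga_Q(\bar\al_i),\ga_Q(\bar\al_{i_0})\big)\le D_C$, and since $\Ga_n$ is the Cayley graph of $A_n$ this distance equals $\min_{\ell\in L_Q}|\bar\al_i-\bar\al_{i_0}+\ell|$. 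Choosing $\ell_i\in L_Q$ achieving the minimum and setting $\bar\al_i':=\bar\al_i-\bar\al_{i_0}+\ell_i$ — a common shift by $-\bar\al_{i_0}$ followed by an individual $L_Q$-shift, both permissible by (i) and (ii) and applied independently on each component — keeps \eqref{eq:lattice-equation} satisfied and yields $|\al_i'|\le D_C\le\sum_j|c_j|$.

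The only genuine obstacle is the diameter estimate, i.e.\ turning ``the sum cancels'' into ``the supports are forced to sit close together modulo $L_Q$''. Everything hinges on the claim that distinct overlap-components contribute disjointly to \eqref{eq:lattice-equation}, so that each component is an independent sub-solution whose pieces must physically meet in $\De_Q$; once this is in place, chaining the diameters of the connected pieces across the component is routine and already delivers the bound (indeed with constant $\sum_j|c_j|$, comfortably inside the stated $2\sum_j|c_j|$, the slack absorbing the use of non-geodesic representatives or edge- versus vertex-support bookkeeping).
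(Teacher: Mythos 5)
Your proof is essentially the paper's own argument: your overlap components are exactly the paper's ``minimal clusters,'' and the three facts you rely on --- component-wise vanishing of the partial sums, invariance under a per-component translation combined with individual $L_Q$-shifts, and the diameter bound $\sum_{i\in C}|c_i|$ obtained by chaining --- are precisely the paper's claims (i)--(iii), with the translation to a basepoint of the cluster as the final step. The one inaccuracy is your assertion that $\supp\si(c_i)$ is connected and contains an edge at the base vertex: cancellation in the $1$-chain can destroy both (e.g.\ for $c_i=a_1[a_2,a_1]a_1^{-1}$ the support of $\si(c_i)$ is a unit square not touching the vertex $1$), so one should work with the image of the path $p_{c_i}$ rather than the chain support --- or simply keep the extra $|c_i|$ of slack, which is exactly why the paper states the bound with the factor $2$ rather than your sharper $\sum_j|c_j|$.
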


\begin{proof}
Denote
$$
  \tau_i = \al_i \tau_Q(c_i), \ i=1,\dots,m.
$$
Given a 1-chain $\rho \in C_1(\De_Q)$,
$$
  \rho = \sum_{i=1}^r k_i e_i, \quad k_i \ne 0, \ e_i \in E(\De_Q), \ e_i \ne e_j \text{ for } i \ne j,
$$
define
$$
  \supp (\rho) = \set{e_1,\dots,e_r}.
$$
Denote $I = \set{1,\dots,m}$.
We a call a non-empty subset $J \seq I$ a {\em cluster} if $\supp(\tau_i) \cap \supp(\tau_j) = \eset$
for any $i\in J$ and $j \notin J$. It follows from the definition that if $J_1$ and $J_2$ are clusters
with non-empty intersection then $J_1 \cap J_2$ is again a cluster. Hence $I$ can be partitioned into a
finite disjoint union of minimal clusters,
$$
    I = J_1 \uplus J_2 \uplus\dots\uplus J_t.
$$
(Another view on minimal clusters: draw a graph with the set of vertices $I$ where two vertices $i$ and $j$
are joined with an edge iff $\supp(\tau_i) \cap \supp(\tau_j) \ne\eset$. Then minimal clusters are connected
components of this graph.)

We introduce an integer-valued distance function $d(e,f)$ on the set $E(\De_Q)$ of edges of $\De_Q$.
By definition, the distance between edges $e$
and $f$ is the distance between their midpoints in the graph $\De_Q$ (where all edges are assumed to have
the length 1). For example, $d(e,f)=1$ if and only if $e$ and $f$ are distinct and have a common  vertex.
The following statements hold:
\begin{enumerate}
\item
If $J$ is a cluster, then $\sum_{i \in J} \tau_i = 0$.
\item
If $J$ is a cluster, then $\sum_{i \in J} (\al_i +\be) \tau_Q(c_i) = 0$ for any $\be\in A_n$.
\item
If $J$ is a minimal cluster, then
$$
  \diam\left( \bigcup_{i \in J} \supp(\tau_i) \right) \le \sum_{i\in J} |c_i|.
$$
\end{enumerate}
There are two types of minimal clusters: trivial and nontrivial.
We say that $J$ is trivial if $J=\{j\}$ which happens if and only if $\supp(\tau_j) = \emptyset$.
If $|J|> 1$, then it is called nontrivial. Clearly, $\supp(\tau_j)\ne\emptyset$ for any $j$
in a nontrivial cluster $J$.

Now, consider an arbitrary minimal cluster $J$. 
If $J$ is trivial, then set $\alpha_j'$ to $(0,\ldots,0)\in A_n$.
If $J$ is nontrivial, then we choose any $j\in J$ and
the origin $\beta\in A_n$ of an arbitrary edge $e\in \supp(\tau_j)$
and set $\alpha_j'$ to $\alpha_j'-\beta$ for every $j\in J$.
It follows from (ii) that so defined $\ti\al'$ satisfies (\ref{eq:lattice-equation})
and from (iii) that $|\al_i'| \le 2 \sum_j |c_j|$.
\end{proof}

\begin{proposition}
Let $n,m\in\MN$.
Given a tuple $(c_1,\ldots,c_m) \in M_n^m$ and a tuple $(\alpha_1,\ldots,\alpha_m) \in A_n^m$
satisfying $|\al_i| \le 2 \sum_j |c_j|$ it requires polynomial time in $\sum_{j=1}^m |c_j|$ 
to check (\ref{eq:lattice-equation}).
\end{proposition}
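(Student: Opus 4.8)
The plan is to reduce the verification of \eqref{eq:lattice-equation} to a finite computation with integer vectors, never constructing the (in general infinite) quotient graph $\De_Q$ explicitly. The basic observation is that an edge of $\Ga_n$ is determined by a pair $(v,j)$, where $v \in A_n \cong \Ints^n$ is its initial vertex and $j \in \set{1,\dots,n}$ is its direction, and that two such edges $(v,j)$, $(v',j')$ have the same image in $\De_Q = \Ga_n / L_Q$ if and only if $j = j'$ and $v - v' \in L_Q$. Thus each $1$-chain $\al_i \tau_Q(c_i)$ is the $\ga_Q$-image of a finite signed list of edges of $\Ga_n$, and \eqref{eq:lattice-equation} holds if and only if, after grouping the edges of all the $\al_i\tau_Q(c_i)$ into classes modulo $L_Q$, the coefficients within every class sum to $0$. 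The only non-elementary ingredient is therefore the ability to reduce a vector of $\Ints^n$ modulo the sublattice $L_Q = \sgp{\bar c_1,\dots,\bar c_m}$ to a canonical representative in time polynomial in the input size $N := \sum_j |c_j|$.

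First I would compute, for each $i$, the $1$-chain $\si(c_i) \in C_1(\Ga_n)$ by tracing the path labeled by $c_i$: starting at $1$ and reading off the letters of $c_i$ one at a time, one records each traversed edge together with its sign. This costs time linear in $|c_i|$ and yields a list of $|c_i|$ signed edges $(v,j)$, each vertex $v$ having coordinates of absolute value at most $|c_i|$. Since $\tau_Q(c_i) = \ga_Q(\si(c_i))$ and the action of $A_n$ descends to $\De_Q$, the shifted chain $\al_i\tau_Q(c_i)$ is the $\ga_Q$-image of the list obtained by replacing each initial vertex $v$ by $v + \al_i$. As $|\al_i| \le 2\sum_j|c_j|$, the resulting coordinates are bounded by $2N + |c_i|$, hence remain polynomial in $N$ and occupy $O(\log N)$ bits each. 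Concatenating these lists over $i$ represents the left-hand side of \eqref{eq:lattice-equation} as a formal sum of at most $N$ signed edges of $\Ga_n$.

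Next I would put $L_Q$ into a form suitable for coset testing. The generators $\bar c_1,\dots,\bar c_m \in \Ints^n$ have entries bounded by $\max_i|c_i| \le N$, so the $m \times n$ integer matrix with these rows has polynomial bit-size; computing its Hermite normal form yields a triangular basis of $L_Q$ together with a deterministic reduction $v \mapsto \rho(v)$ sending $v$ to the unique canonical representative of the coset $v + L_Q$ (reduce the pivot coordinates of $v$ modulo the pivot entries of the normal form). Applying $\rho$ to the initial vertex of each of the $O(N)$ edges produced above labels every edge by its class $(\rho(v),j)$ in $\De_Q$. Finally I would sort these $O(N)$ labelled edges lexicographically, add the signs within each block of identical labels, and accept precisely when every block sum is $0$. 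All of this is bounded-degree arithmetic and sorting on $O(N)$ objects of $O(\log N)$ bit-size, hence polynomial in $N$.

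The only genuine obstacle is the third step: one must ensure that the Hermite normal form computation, and the reduction $\rho$ built from it, run in time polynomial in $N$, not merely return a correct answer. This is exactly the point at which classical polynomial-time integer linear algebra, which controls the intermediate coefficient blow-up in the Euclidean reduction underlying the Hermite normal form, is invoked. Everything else in the procedure is elementary and manifestly polynomial, so this single ingredient suffices to establish the claimed bound.
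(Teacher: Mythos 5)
Your proposal is correct and follows essentially the same route as the paper: the paper's (two-sentence) proof also builds the algebraic sum of edges traversed by the shifted paths and distinguishes vertices of $\De_Q$ by integer linear algebra (``Gauss elimination''), which is exactly the coset-reduction modulo $L_Q$ that you carry out via the Hermite normal form. Your write-up simply supplies the details — and correctly identifies HNF with controlled coefficient growth as the right tool — that the paper leaves implicit.
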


\begin{proof}
The sum in (\ref{eq:lattice-equation}) is an algebraic sum of edges in $\Delta_Q$
traversed by paths $\al_i\tau_Q(c_i)$. We clearly can construct such a sum 
in polynomial time because we can efficiently distinguish
vertices in $\Delta_Q$ using Gauss elimination.
\end{proof}

\begin{corollary}
The Diophantine problem for the class of spherical equations is in $\NP$.
\end{corollary}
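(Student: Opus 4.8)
The plan is to exhibit a standard guess-and-verify nondeterministic polynomial-time algorithm, assembled from the three results established immediately above. The input is a spherical equation $W=1$ specified by its list of constants $c_1,\dots,c_m\in M_n$, and I take its size to be $N=\sum_{j=1}^m|c_j|$ (with $n$ and $m$ bounded by $N$). By Corollary~\ref{co:main-reduction-improved}, deciding whether $W=1$ is solvable is exactly the same as deciding whether there is a tuple $\ti\al=(\al_1,\dots,\al_m)\in(A_n)^m$ with $\al_1\tau_Q(c_1)+\dots+\al_m\tau_Q(c_m)=0$. This reformulation is the heart of the matter, so the nondeterministic certificate will be such a tuple $\ti\al$.

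First I would bound the certificate. By the preceding size-reduction proposition, if any solution tuple exists then one exists with $|\al_i'|\le 2N$ for every $i$. Since each $\al_i'$ is an element of $A_n\cong\MZ^n$ whose word length is at most $2N$, each of its $n$ coordinates has absolute value at most $2N$ and can be written in $O(\log N)$ bits; hence the entire tuple $\ti\al'$ admits an encoding of size $O(mn\log N)$, which is polynomial in $N$. Thus it suffices for the algorithm to guess a tuple obeying this bound: guessing a polynomially long certificate is permitted in an $\NP$ computation, and no larger witness need ever be considered.

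Next I would verify the guess. The guessed tuple is first checked to satisfy $|\al_i|\le 2N$ for all $i$, which is immediate from its encoding. Then, by the second preceding proposition, checking the identity $\sum_i\al_i\tau_Q(c_i)=0$ for a tuple meeting this size bound can be carried out in time polynomial in $N$: one forms the algebraic sum of the edge chains $\al_i\tau_Q(c_i)$ in $\De_Q$, distinguishing vertices of $\De_Q$ by Gaussian elimination, and tests whether all coefficients vanish. Accepting precisely when the identity holds yields a nondeterministic algorithm that accepts if and only if a valid witness exists, placing the problem in $\NP$.

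I do not anticipate a genuine obstacle, since all three ingredients (the reduction, the a~priori size bound, and the polynomial-time check) are already in hand. The only points demanding care are bookkeeping ones: confirming that the word-length bound $2N$ on $\al_i'$ really does translate into a polynomial-size binary encoding of the $\MZ^n$-coordinates, and confirming that the verification step operates on a representation of the quotient graph $\De_Q$ that is itself of size polynomial in $N$, so that the combined verifier runs in polynomial time.
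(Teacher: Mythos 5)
Your proposal is correct and follows exactly the paper's approach: the certificate is a tuple $\ti\al$ satisfying \eqref{eq:lattice-equation} with $|\al_i|\le 2\sum_j|c_j|$, whose existence is guaranteed by the size-reduction proposition and whose validity is checkable in polynomial time by the verification proposition. You have merely spelled out the encoding and verification bookkeeping that the paper leaves implicit.
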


\begin{proof}
For a certificate we can take a tuple $\ti\al$ satisfying \eqref{eq:lattice-equation} with
bound $|\al_i| \le 2 \sum_j |c_j|$.
\end{proof}

\begin{corollary}
The Diophantine problem for the class $\CC_m$ of spherical equations in $M_n$ with 
a bounded number of variables is in $\P$.
\end{corollary}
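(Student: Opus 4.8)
The plan is to convert the reduction already in place into a bounded brute-force search, using that for the class $\CC_m$ sitting inside the single fixed group $M_n$ both the rank $n$ and the number of variables $m$ are constants. First I would combine Corollary~\ref{co:main-reduction-improved} with the preceding proposition bounding the length of a solution tuple: together they say that $W=1$ is solvable if and only if there is a tuple $\ti\al=(\al_1,\dots,\al_m)\in(A_n)^m$ satisfying~\eqref{eq:lattice-equation} and obeying the length bound $|\al_i|\le 2\sum_j|c_j|$ for every $i$. This replaces the original solvability question by the finite question of whether some tuple inside a bounded box satisfies the lattice equation.

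The key step is a volume count. Writing $B=2\sum_j|c_j|$, note that $B$ is linear in the length of the input equation. Under the identification of $A_n$ with $\Ints^n$, where $|\cdot|$ is the $\ell_1$ word length, the ball $\setof{\al\in A_n}{|\al|\le B}$ is contained in the cube $\{-B,\dots,B\}^n$ and so has at most $(2B+1)^n$ elements. Hence the number of candidate tuples $\ti\al$ with every $|\al_i|\le B$ is at most $(2B+1)^{nm}$. Since $n$ and $m$ are fixed, this is a polynomial of fixed degree $nm$ in $B$, and therefore polynomial in the input length.

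I would then enumerate all these candidate tuples and test each one against~\eqref{eq:lattice-equation} using the proposition that checks the lattice equation in polynomial time; the equation $W=1$ is solvable exactly when some candidate passes. The overall procedure is thus polynomial, placing the Diophantine problem for $\CC_m$ in $\P$. The crux, and the only place where the hypotheses are really used, is the volume count: it yields a polynomial bound only because the rank $n$ is a fixed constant, so that the ball of radius $B$ in $\Ints^n$ contains polynomially many lattice points. This is legitimate here because $\CC_m$ is defined inside the single group $M_n$; were $n$ instead treated as part of the input, the factor $(2B+1)^n$ would become exponential, and one would have to replace the naive enumeration by a search over the polynomially many relative placements arising from edge-alignments of the fixed cycles $\tau_Q(c_i)$.
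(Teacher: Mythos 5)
Your argument is correct and follows essentially the same route as the paper: the paper's proof is precisely the observation that the certificate set $\{(\al_1,\ldots,\al_m) \mid |\al_i| \le 2\sum_j |c_j|\}$ has polynomially many elements (degree $nm$ with $n,m$ fixed), so one can enumerate and test each candidate with the polynomial-time verification proposition. You have simply spelled out the brute-force enumeration that the paper leaves implicit.
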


\begin{proof}
The size of the set of all possible certificates $\{(\al_1,\ldots,\al_m) \mid |\al_i| \le 2 \sum_{j=1}^m |c_j|\}$
can be bounded above by a polynomial $\rb{2\sum_{j=1}^m |c_j|}^{nm}.$
\end{proof}

To prove $\NP$-completeness of the Diophantine problem for equations $W=1$
we use $\NP$-completeness of the {\em Square packing puzzle problem} (see \cite{Demaine_Demaine:2007}):
\begin{itemize}
\item
{\em Input:} A tuple of positive integers $(n_1,n_2,\ldots,n_k)$ written
in the unary notation (i.e.\ the size of the input is $\sum_i n_i$).
\item
{\em Question:} Can the set of $k$ square pieces of sizes $n_1\times n_1$, $\ldots$, $n_k\times n_k$
be exactly packed into the square box of area $n_1^2 + n_2^2+\ldots+n_k^2$?
\end{itemize}

For a $k$-tuple $\ti n= (n_1,\ldots,n_k) \in\MN^k$
satisfying $n_0=\sqrt{n_1^2+\ldots+n_k^2} \in \MN$ define a word $W_{\ti n} \in M_2 * F_{\set{z_0,z_1,\dots,z_k}}$:
    $$W_{\ti n} = z_0 [a_2^{n_0},a_1^{n_0}] z_0^{-1} \cdot \prod_{i=1}^k z_i [a_1^{n_i},a_2^{n_i}] z_i^{-1}.$$
\begin{theorem}
The Diophantine problem for equations $W_{\ti n}=1$ in $M_2$ is $\NP$-complete.
\end{theorem}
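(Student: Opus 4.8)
Membership in $\NP$ is already established above, so the plan is to prove $\NP$-hardness by a polynomial-time reduction from the Square packing puzzle problem. The starting point is that every constant of $W_{\ti n}$ is a commutator and therefore lies in $M_2'$, so all the images $\bar c_i$ are trivial and $L_Q = \set{0}$. Consequently the quotient $\De_Q$ is just $\Ga_2$, the integer grid in the plane, and $\tau_Q(c_i) = \si(c_i)$ for every $i$. First I would read off these cycles geometrically: following the path of $[a_1^{n_i},a_2^{n_i}]$ from the origin shows that $\si([a_1^{n_i},a_2^{n_i}])$ is the counterclockwise boundary of the filled axis-parallel square $[0,n_i]^2$, whereas $\si([a_2^{n_0},a_1^{n_0}])$ traverses $[0,n_0]^2$ clockwise. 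Writing $D_i$ for the $2$-chain given by all unit cells of the respective square, we thus have $\si(c_i) = \bd_2 D_i$ for $1 \le i \le k$ and $\si(c_0) = -\bd_2 D_0$.

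By Corollary \ref{co:main-reduction-improved}, the equation $W_{\ti n}=1$ is solvable if and only if there are shifts $\al_0,\al_1,\dots,\al_k \in A_2$ with $\sum_i \al_i \si(c_i)=0$, where $\al_i \si(c_i)$ is the boundary cycle of the square $D_i$ translated by $\al_i$. Because $U_2$ is contractible we have $Z_2(U_2)=H_2(U_2)=0$, so $\bd_2$ is injective on $C_2(U_2)$; applying $\bd_2^{-1}$ rewrites the condition as the $2$-chain identity
$$
  \sum_{i=1}^k \al_i D_i = \al_0 D_0
  \quad\text{in } C_2(U_2).
$$
Comparing the coefficient at each unit cell, this says exactly that the translated small squares cover every cell of the translated big square once and cover nothing outside it, i.e.\ that the pieces $n_1\times n_1,\dots,n_k\times n_k$ form a grid-aligned exact packing of the $n_0\times n_0$ box. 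I expect this passage from the algebraic vanishing condition to the combinatorics of an exact tiling — injectivity of $\bd_2$ together with the cell-by-cell bookkeeping — to be the technical core of the argument.

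To complete the reduction I would match grid-aligned packings with packings in the sense of the puzzle problem. A perfect packing of an integer square by integer squares is forced to be axis-parallel and can then be snapped to the grid (the piece occupying the lowest, leftmost uncovered corner must sit at integer coordinates, and one proceeds by induction), so a packing exists if and only if a grid-aligned one does; this equivalence is the step I would be most careful about. Instances with $n_0=\sqrt{n_1^2+\dots+n_k^2}\notin\MN$ admit no exact packing and can be sent to any fixed unsolvable equation, while for $n_0\in\MN$ the word $W_{\ti n}$ is computed in time polynomial in the unary input size $\sum_i n_i$. Since the Square packing puzzle problem is $\NP$-complete \cite{Demaine_Demaine:2007}, this yields $\NP$-hardness, and together with membership in $\NP$ the theorem follows.
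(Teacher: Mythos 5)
Your proof is correct and follows essentially the same route as the paper: both establish hardness by reducing from the square packing puzzle via Corollary \ref{co:main-reduction-improved}, identifying each $\tau_Q(c_i)$ with the boundary contour of an $n_i\times n_i$ grid square (with $L_Q$ trivial, so $\De_Q=\Ga_2$). The paper compresses the equivalence between the vanishing condition and exact packability into a single sentence, while you supply the supporting details it leaves implicit --- injectivity of $\bd_2$ on $C_2(U_2)$, the cell-by-cell reading of $\sum_i \al_i D_i = \al_0 D_0$, and the reduction of arbitrary perfect packings to grid-aligned ones --- which is a welcome elaboration rather than a different argument.
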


\begin{proof}
Fix a tuple $\ti n$. The set $Q$ associated with $W_{\ti n}$ is
    $$Q = \{[a_1^{n_0},a_2^{n_0}]^{-1},[a_1^{n_1},a_2^{n_1}],\ldots, [a_1^{n_k},a_2^{n_k}]\}.$$
The corresponding subgroup $L_Q$ of $A_2$ is trivial and $\Delta_Q$ is simply the Cayley graph of $\MZ^2$
relative to the standard generators $\{\bar a_1,\bar a_2\}$.
Hence, for every $c=[a_1^{n_i},a_2^{n_i}]\in Q$, $\tau(c)$ is the grid $(n_i\times n_i)$-square contour
starting at $(0,0)$.
It follows from Corollary \ref{co:main-reduction-improved} that the set of 
$k$ squares given by $\ti n$ can be packed into the
$(n_0\times n_0)$-square if and only if the equation $W_{\ti n}=1$ has a solution. We get a reduction to the
square packing puzzle problem.
\end{proof}

Observe that an equation $W=1$ in $M_2$ has a solution if and only if it has a solution when viewed as
an equation in $M_n$ for $n\ge 2$. We immediately get:

\begin{corollary}
For any $n\ge 2$, the Diophantine problem for the class of spherical quadratic equations
in $M_n$ is $\NP$-complete.
\end{corollary}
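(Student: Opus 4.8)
The plan is to assemble the statement from two ingredients that are essentially already in hand: membership in $\NP$ and $\NP$-hardness, with the latter transported from $M_2$ to $M_n$ by a retraction. Membership requires no new work, since the corollary placing the Diophantine problem for spherical quadratic equations in $\NP$ was proved for arbitrary rank $n$ (its certificate bound $|\al_i| \le 2\sum_j |c_j|$ depends only on the constants, not on $n$). So the entire burden falls on hardness.

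For hardness I would avoid constructing a new reduction from the square packing puzzle and instead reuse the family $\{W_{\ti n}=1\}$ built in the preceding Theorem, which already gives a polynomial-time reduction from square packing into equations over $M_2$. The only gap to close is the observation stated just before the corollary: an equation $W=1$ all of whose constants lie in $M_2$ is solvable in $M_2$ if and only if it is solvable when viewed as an equation over $M_n$ for $n \ge 2$. Granting this, the same tuples $\ti n$ yield a reduction from square packing directly into the $\CC$-Diophantine problem over $M_n$, because each $W_{\ti n}$ involves only the constants $a_1,a_2$; hence the problem over $M_n$ is $\NP$-hard, and combined with membership it is $\NP$-complete.

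The substance therefore lies in the observation, which I would prove by a retraction argument. Let $\iota\colon M_2 \to M_n$ be the natural inclusion $a_i \mapsto a_i$ ($i=1,2$), and let $\pi\colon M_n \to M_2$ be the homomorphism determined by $a_1 \mapsto a_1$, $a_2 \mapsto a_2$, and $a_j \mapsto 1$ for $j \ge 3$; both are well-defined maps of free metabelian groups (each is induced by a map of free groups, which respects the derived series) and $\pi \circ \iota = \mathrm{id}_{M_2}$. For the forward direction, a solution over $M_2$ assigns the variables values in $M_2 \seq M_n$, so it is at once a solution over $M_n$. For the backward direction, given a solution $\al\colon M_n * F_X \to M_n$ with $\al(W)=1$ and $\al$ fixing $M_n$ pointwise, I would push it down by composing with $\pi$: since every constant $c_i$ occurring in $W$ lies in $M_2$ and is fixed by $\pi$, the map sending each variable $x \in X$ to $\pi(\al(x))$ and fixing $M_2$ is a homomorphism $M_2 * F_X \to M_2$ carrying $W$ to $\pi(\al(W)) = \pi(1) = 1$, i.e.\ a solution over $M_2$.

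I expect the only real (and modest) obstacle to be this backward direction: one must verify that composing an $M_n$-solution with $\pi$ produces a genuine homomorphism $M_2 * F_X \to M_2$ that fixes the coefficient subgroup $M_2$ and sends $W$ to the identity. This hinges entirely on $\pi$ being an honest group retraction that fixes $a_1,a_2$, so that it acts as the identity on the constants $c_i$; once that is recorded, the equality $\pi(\al(W)) = 1$ is formal. Everything else — membership in $\NP$ and the polynomial-time, unary-input nature of the square packing reduction — is inherited verbatim from the earlier results.
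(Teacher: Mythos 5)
Your proposal is correct and follows essentially the same route as the paper: combine the earlier $\NP$-membership corollary with the $\NP$-hardness of the equations $W_{\ti n}=1$ over $M_2$, transported to $M_n$ via the observation that an equation with constants in $M_2$ is solvable over $M_2$ iff it is solvable over $M_n$. The only difference is that you actually prove that observation with the retraction $\pi\colon M_n\to M_2$, whereas the paper merely asserts it ("Observe that\dots"); your retraction argument is a valid justification of that step.
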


\bibliography{main_bibliography}

\end{document}